\documentclass[10pt]{article}
\usepackage{amsmath, amsfonts, amsthm, amssymb, hyperref, setspace, geometry, authblk, color}

\title{\bf On the group pseudo-algebra of finite groups}

\author[1]{Mark L. Lewis}
\author[1]{Quanfu Yan}

\affil[1]{Department of Mathematical Sciences, Kent State University, Kent, OH 44242, USA}

{
    \makeatletter
    \renewcommand\AB@affilsepx{: \protect\Affilfont}
    \makeatother

    \affil[ ]{\par Email addresses}
    
    \makeatletter
    \renewcommand\AB@affilsepx{, \protect\Affilfont}
    \makeatother

    \affil[ ]{\href{mailto:lewis@math.kent.edu}{lewis@math.kent.edu}}
    \affil[ ]{\href{mailto:qyan5@kent.edu}{qyan5@kent.edu}}

}

\renewcommand{\Affilfont}{\small\it}
\date{}
\setstretch{1.2}
\geometry{margin=1in}
\usepackage{abstract}

\pagestyle{plain}
\newtheorem{theorem}{Theorem}[section]

\newtheorem{lemma}[theorem]{Lemma}

\theoremstyle{definition}

\newtheorem*{conjecture*}{Conjecture A}
\allowdisplaybreaks

\expandafter\let\expandafter\oldproof\csname\string\proof\endcsname
\let\oldendproof\endproof
\renewenvironment{proof}[1][\proofname]{%
  \oldproof[\bfseries\scshape #1]%
}{\oldendproof}

\usepackage{stackengine,scalerel}
\stackMath
\def\trianglelefteqslant{\ThisStyle{\mathrel{%
  \stackinset{r}{.75pt+.15\LMpt}{t}{.1\LMpt}{\rule{.3pt}{1.1\LMex+.2ex}}{\SavedStyle\leqslant}%
}}}
\renewcommand{\unlhd}{\trianglelefteqslant}

\renewcommand{\leq}{\leqslant}
\renewcommand{\geq}{\geqslant}
\begin{document}
\maketitle
\begin{abstract}
\noindent\textbf{Abstract.} Let $G$ be a finite group. The group pseudo-algebra of $G$ is defined as the multi-set $C(G)=\{(d,m_G(d))\mid d\in{\rm Cod}(G)\},$ where $m_G(d)$ is the number of irreducible characters of with codegree $d\in {\rm Cod}(G)$. We show that there exist two finite $p$-groups with distinct orders that have the same group pseudo-algebra, providing an answer to Question 3.2 in \cite{Moreto2023}. In addition, we also discuss under what hypothesis two $p$-groups with the same group pseudo-algebra will be isomorphic.

\medskip

\noindent{\bf Keywords:} Finite $p$-groups, Characters, Group pseudo-algebra.\\
 \noindent{\bf MSC:} 20C15, 20D15.
\end{abstract}

\section{Introduction}

All groups considered in this article are finite. As usual, $G$ will always be a finite group, and $k(G)$ denotes the number of conjugacy classes of $G.$ We write ${\rm Irr}(G)$ to denote the set of complex irreducible characters of $G$ and ${\rm cd}(G)=\{\chi(1)\mid \chi\in {\rm Irr}(G)\}.$ Let
$\chi\in {\rm Irr}(G)$. The codegree of $\chi$ is defined as $${\rm cod}\chi=\frac{|G:\ker\chi|}{\chi(1)},$$ which was introduced by Qian, Wang and Wei in \cite{Qian1}. The concept has been studied extensively and proved to have interesting connections with some algebraic structure of finite groups (see, for example, \cite{Moreto2023,Isaacs2011,Qian2021,Yang2017,Du2016}).

In \cite{Moreto2023}, A. Moret\'{o} first introduced the concept of the group pseudo-algebra, which is defined as the multi-set $$C(G)=\{(d,m_G(d))\mid d\in{\rm Cod}(G)\},$$ where ${\rm Cod}(G)=\{{\rm cod}\chi\mid  \chi\in {\rm Irr}(G)\}$ and $m_G(d)$ is the number of irreducible characters having codegree $d$. He showed that if two finite abelian groups have the same group pseudo-algebra, then they are isomorphic. Additionally, a natural question arises: must groups have the same order if they have the same group pseudo-algebra? A particular case of the question asks whether $G\cong A$ provided that $C(G)=C(A),$ where $G$ is a finite group and $A$ is an abelian $p$-group for some prime $p.$ He gave an affirmative answer when either $A$ is cyclic or the exponent of $A$ does not exceed $p^2$ {\rm (see \cite[Theorem 3.4]{Moreto2023})}. Thus, the next natural case to look is when the exponent of $A$ is $p^3.$ We will prove that if $G$ is a nonabelian group and $A$ is an abelian $p$-group of exponent $p^3$ so that $C(G)=C(A)$, then $p=2.$ This result guides us in constructing examples, suggesting that the question does not always yield a positive answer.

\begin{theorem}\label{thm1.1} Let $p$ be a prime. There exists an abelian $p$-group $A$ and a group $G$ with $C(G)=C(A)$ so that $G$ is not isomorphic to $A.$ Hence, groups may have different orders even though they have the same group pseudo-algebra.
\end{theorem}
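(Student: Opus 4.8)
The plan is to prove an existence statement, so I need to construct an explicit example: an abelian $p$-group $A$ and a nonabelian group $G$ with $C(G) = C(A)$ but $G \not\cong A$. The preceding discussion gives a crucial signpost: the authors just argued that if $G$ is nonabelian and $A$ is abelian of exponent $p^3$ with $C(G)=C(A)$, then necessarily $p=2$. So I would focus entirely on $p=2$ and look for $G$ nonabelian and $A$ abelian of exponent $8$.

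Let me think about what codegrees look like for the candidate $A$. For an abelian group, every irreducible character $\chi$ is linear, so $\chi(1)=1$ and $\mathrm{cod}\,\chi = |A:\ker\chi|$, which is just the order of the cyclic image $A/\ker\chi$ — i.e. the order of $\chi$ as an element of the dual group $\widehat{A} \cong A$. Hence the multiset $C(A)$ records, for each divisor $d$ of $\exp(A)$, how many elements of $A$ have order exactly $d$. Concretely I would take $A$ of exponent $8$, say a group like $C_8 \times C_2$ or $C_8 \times C_8$ or $C_8\times C_2\times C_2$, and compute the element-order distribution, which directly yields $C(A) = \{(1,1), (2, a_2), (4, a_4), (8, a_8)\}$ where $a_d$ counts elements of order $d$.

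The heart of the matter is then to produce a nonabelian $2$-group $G$ whose codegree multiset matches this. For a nonabelian $G$, the codegrees are $\mathrm{cod}\,\chi = |G:\ker\chi|/\chi(1)$, and the nonlinear characters contribute codegrees that can still be powers of $2$; the subtlety is that $G$ will generally have a different order than $A$, which is exactly the phenomenon the theorem advertises. I would search among small $2$-groups — modular/Suzuki-type groups, central products, or groups of order $32$ or $64$ with a single nonlinear character degree — computing $C(G)$ and matching it against the $C(A)$ computed above. The natural strategy is to pick $G$ so that its linear characters (i.e. characters of $G/G'$) reproduce the low-order part of the distribution, while its few nonlinear characters supply exactly the high-codegree entries that $A$'s high-order elements provide, but at the cost of $|G| \ne |A|$.

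The main obstacle I anticipate is the matching of multiplicities across all codegree values simultaneously: it is easy to match the set of codegrees $\mathrm{Cod}(G)=\mathrm{Cod}(A)$ but hard to get every multiplicity $m_G(d)=m_A(d)$ to agree while keeping $G$ nonabelian and $|G|\neq|A|$. I expect the resolution to come from a careful choice where $G/G'$ contributes a prescribed count of linear characters of each codegree and the nonlinear characters, all of which for a suitable $2$-group can share a single degree $2$, fill in the remaining multiplicities exactly. Once a candidate pair is identified, verifying $C(G)=C(A)$ reduces to a finite character-table computation (or a short class-equation and commutator-subgroup argument), and confirming $G\not\cong A$ is immediate since one is abelian and the other is not — and the order discrepancy $|G|\neq|A|$ then proves the final sentence of the theorem.
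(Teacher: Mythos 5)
Your outline correctly narrows attention to $p=2$ and exponent $8$, and even lists the right candidate $A\cong C_8\times C_2\times C_2$, but it contains a fatal flaw in the mechanism it relies on, in addition to the unavoidable problem that an existence theorem is not proved until a witness is actually exhibited and verified (your construction is deferred entirely to a search whose success is only ``expected''). The fatal flaw: you propose to find $G$ of order $32$ or $64$ whose nonlinear irreducible characters all share the single degree $2$, with those characters filling in the remaining multiplicities. This is provably impossible. If $G$ is nonabelian with ${\rm cd}(G)=\{1,p^e\}$ and $C(G)=C(A)$ for an abelian $p$-group $A$ of order $p^a$, then $k(G)=k(A)=p^a$, and summing the squares of the character degrees gives $|G|=|G:G'|+(k(G)-|G:G'|)p^{2e}$; writing $|G|=p^n$ and $|G:G'|=p^r$, this yields $p^{n-r}-1=(p^{a-r}-1)p^{2e}$, which is absurd because the left-hand side is prime to $p$ while the right-hand side is divisible by $p^{2e}$. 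This is precisely Lemma \ref{lem2.1} of the paper: any nonabelian $G$ sharing its pseudo-algebra with an abelian $p$-group must have at least \emph{three} character degrees, so the ``single nonlinear degree'' strategy can never balance the multiplicity count.

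Moreover, Lemma \ref{lem2.2} (whose key input is the Gavioli--Mann--Monti--Previtali--Scoppola theorem on $p$-groups with many conjugacy classes) forces $p=2$ and $|G|=2^{a+2}=4|A|$, so your proposed search space of orders $32$ and $64$ contains no example at all: it would require $|A|=8$ or $|A|=16$ with exponent $8$, i.e.\ $A\cong C_8$ (excluded, since Moret\'{o} proved that cyclic $A$ forces $G\cong A$) or $A\cong C_8\times C_2$ (excluded by the remark following the proof of Theorem \ref{thm1.2}). The first genuine example occurs one step higher: with $A\cong C_8\times C_2\times C_2$, so that $C(A)=\{(1,1),(2,7),(4,8),(8,16)\}$, the two lemmas force any nonabelian $G$ with $C(G)=C(A)$ to satisfy ${\rm cd}(G)=\{1,2,4\}$ and $|G|=2^7=128$, and the proof is completed by checking in GAP that, for example, ${\rm SmallGroup}(128,755)$, ${\rm SmallGroup}(128,756)$ and ${\rm SmallGroup}(128,773)$ have exactly this pseudo-algebra. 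Without producing and verifying such a concrete witness, and with the degree heuristic corrected to three degrees and order $4|A|$, your proposal cannot be completed as written.
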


From the above theorem, it is clear that the question is not always true when the abelian $p$-group $A$ has three generators. Thus, it might be a good idea to focus on the case when $A$ has two generators, in particular, when $A \cong C_{p^n}\times C_p,$ where $p$ is a prime.  We first consider metacyclic groups $G$ satisfying $C(G)=C(A)$ and it turns out that $G$ must be isomorphic to $A.$ Applying this result, we can prove that for a group $G$ and $A \cong C_{p^n}\times C_p,$ if $C(G)=C(A),$ then either $G\cong A$ or $|G:G'|=p^2$ and $p>2.$  

\begin{theorem}\label{thm1.2} Let $G$ be a group and $A\cong C_{p^n}\times C_p,$ where $p$ is a prime and $n\geq 3$ is an integer. Suppose that $C(G)=C(A).$ Then $G$ is a $p$-group and one of the following holds:

$(1)$ $G\cong A.$

$(2)$ $|G:G'|=p^2,$ $p>2$ and $Z(G)$ is noncyclic. In addition, there is a unique maximal subgroup $X$ of $G'$ which is normal in $G$ so that the factor group $G/X$ is nonabelian of order $p^3$ and of exponent $p.$
\end{theorem}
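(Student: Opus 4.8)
The plan is to read off from $C(G)=C(A)$ a chain of numerical constraints that progressively forces the structure of $G$, and then reduce to the metacyclic case already settled. I would begin by computing $C(A)$: since $A$ is abelian, every irreducible character is linear with codegree equal to its order in $\widehat A\cong A$, so $m_A(p^k)$ is the number of elements of order $p^k$ in $C_{p^n}\times C_p$. This gives $m_A(1)=1$, $m_A(p)=p^2-1$ and $m_A(p^k)=p^k(p-1)$ for $2\le k\le n$, with $p^n$ the largest codegree and $\sum_k m_A(p^k)=k(A)=p^{n+1}$.

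Three reductions come next. First, every codegree of $G$ is a power of $p$, and since the primes dividing the codegrees are exactly the primes dividing $|G|$, $G$ is a $p$-group; also $k(G)=\sum_d m_G(d)=p^{n+1}$ and the trivial character is the unique one of codegree $1$. Second, a character of codegree $p$ must be linear (a nonlinear one would yield a degree-$p$ character of a group of order $p^2$), so the $m_G(p)=p^2-1$ characters of codegree $p$ are exactly the order-$p$ elements of $\widehat{G/G'}$; hence $G/G'$ has rank $2$ and $G$ is $2$-generated. Writing $G/G'\cong C_{p^a}\times C_{p^b}$ with $a\ge b\ge 1$, the third reduction compares codegree-$p^2$ counts: if $b\ge 2$ the linear characters of order $p^2$ already number $p^4-p^2>p^2(p-1)=m_A(p^2)$, which is impossible, so $b=1$ and $G/G'\cong C_{p^a}\times C_p$.

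Now I would split on metacyclicity. If $G$ is metacyclic, the metacyclic case proved earlier yields $G\cong A$, giving conclusion $(1)$. So suppose $G$ is not metacyclic. When $a\ge 2$ the $p^2(p-1)$ linear characters of order $p^2$ already realize all of $m_A(p^2)$, so $G$ has no nonlinear character of codegree $p^2$; as such a character exists exactly when $G$ has a nonabelian quotient of order $p^3$ (the degree must be $p$, the degree-$p^2$ alternative being ruled out by the bound $\chi(1)^2\le |G:\ker\chi|/p$ for $p$-groups), $G$ would then have no nonabelian quotient of order $p^3$. A $2$-generated $p$-group with this property is metacyclic, contrary to assumption; hence $a=1$, i.e. $|G:G'|=p^2$, and (as $k(G)=p^{n+1}$ with $n\ge 3$ rules out $G$ abelian) $G$ is nonabelian. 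This is the regime of conclusion $(2)$. To extract its structure I would analyze the characters of smallest nonlinear codegree $p^2$: each is inflated from a nonlinear character of a nonabelian quotient $G/N$ of order $p^3$, and since $|G:G'|=p^2$ one checks that $N\triangleleft G$ is a maximal subgroup of $G'$; tracking these quotients shows $G'$, and hence $Z(G)$, is noncyclic, isolates the unique $G$-invariant maximal subgroup $X=G^p$ of $G'$ for which $G/X$ has exponent $p$, and—because a nonabelian group of order $p^3$ and exponent $p$ exists only for odd $p$—forces $p>2$.

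The main obstacle is the group-theoretic structure theory concentrated in the last paragraph. The cleaner half is the implication ``a $2$-generated $p$-group with no nonabelian quotient of order $p^3$ is metacyclic,'' which I expect to obtain by passing to $\bar G=G/\gamma_3(G)(G')^p$ (where $\bar G'$ is cyclic of order $p$) and showing that the absence of such a quotient is equivalent to the preimage in $\bar G$ of $\Phi(\bar G/\bar G')$ being cyclic, which is Blackburn's metacyclicity criterion. The harder half is the fine bookkeeping of conclusion $(2)$: converting the codegree-$p^2$ multiplicity into precise information about the $G$-invariant maximal subgroups of $G'$, pinning down the distinguished exponent-$p$ quotient $G/G^p$, and establishing the noncyclicity of $Z(G)$; this is the delicate point where the interplay between the character count and the subgroup structure of $G'$ must be controlled, and where I expect the bulk of the effort to lie.
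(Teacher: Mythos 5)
Your opening reductions (computing $C(A)$, deducing that $G$ is a $p$-group with $k(G)=p^{n+1}$, that codegree-$p$ characters are linear, and that $G/G'\cong C_{p^a}\times C_p$) are correct and match the paper in substance, and your treatment of $|G:G'|>p^2$ by exact multiplicity counting at codegree $p^2$ is a legitimate alternative to the paper's route (the paper instead passes to $\overline G=G/X$ with $X$ the unique maximal $G$-invariant subgroup of $G'$, shows every nonlinear irreducible character of $\overline G$ is faithful, deduces that the preimage $\overline C$ of the cyclic factor is cyclic, and concludes $\overline G$, hence $G$, is metacyclic). However, the lemma your argument hinges on there --- a $2$-generated $p$-group with no nonabelian quotient of order $p^3$ is metacyclic --- is only asserted, and the criterion you sketch for it is false as stated: the nonabelian group of order $p^3$ and exponent $p$ has cyclic preimage of $\Phi(\bar G/\bar G')$ (namely $\bar G'\cong C_p$) yet is not metacyclic, while $Q_8$ is metacyclic yet is its own nonabelian quotient of order $8$; so neither direction of your claimed equivalence is a correct ``Blackburn criterion.'' The lemma itself is true, and can be proved by exactly your reduction ($\bar G=G/\gamma_3(G)(G')^p$, which coincides with the paper's $G/X$) followed by a case analysis on whether a generator of $\bar G'$ lies in a cyclic generator's subgroup, together with Blackburn's theorem that $G$ is metacyclic if and only if $G/X$ is; but none of that is in your write-up, so this step is a genuine gap.

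The more serious failure is your plan for conclusion $(2)$. Since $G$ is $2$-generated, the uniqueness statement you yourself invoke (Blackburn) says $G'$ has exactly \emph{one} maximal subgroup $X$ that is normal in $G$. When $|G:G'|=p^2$, every character of codegree $p^2$ is nonlinear (as $G/G'\cong C_p\times C_p$ has no element of order $p^2$), and your own analysis shows its kernel is a maximal $G$-invariant subgroup of $G'$, hence equals $X$; so these characters are precisely the faithful nonlinear characters of the nonabelian group $G/X$ of order $p^3$, of which there are exactly $p-1$. Thus $m_G(p^2)=p-1$, while $m_A(p^2)=p^2(p-1)$. So ``tracking these quotients'' cannot show $G'$ or $Z(G)$ noncyclic: there is only one quotient to track, and the count yields a contradiction, i.e.\ this regime is vacuous (a conclusion strictly stronger than the theorem, which neither you nor, incidentally, the paper notices; the paper's own proof also never establishes the noncyclicity of $Z(G)$). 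Your fallback inference ``$G'$ noncyclic $\Rightarrow Z(G)$ noncyclic'' is also false in general: $C_p\wr C_p$ ($p\ge 3$) is $2$-generated with elementary abelian derived subgroup of rank $p-1$ and cyclic center of order $p$. By contrast, the paper obtains the structural facts of $(2)$ without any multiplicity count: $X$ and the order-$p^3$ quotient come straight from Blackburn's lemma, exponent $p$ comes from ``exponent $p^2$ would make $G/X$, hence $G$, metacyclic, contradicting the metacyclic theorem,'' and $p>2$ from the classification of $2$-groups with $|G:G'|=4$. So the last third of your proposal, as described, cannot be carried out; executed honestly it proves something different from what you claim it will.
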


Applying the above result, we show that $G\cong A$ if $A\cong C_{p^3}\times C_p$ and $C(G)=C(A).$ We also demonstrate that under the same hypothesis as stated in the above theorem, if, in addition, $G$ has either a metacyclic maximal subgroup or a two-generator derived subgroup $G',$ then $G\cong A.$  

\begin{theorem}\label{thm1.3} Let $G$ be a group and $A\cong C_{p^n}\times C_p,$ where $p$ is a prime. Suppose that $C(G)=C(A).$ Then $G\cong A$ if one of the following holds:

$(1)$ $G$ has a metacyclic maximal subgroup,

$(2)$ The derived subgroup $G'$ is generated by two elements,

$(3)$ The derived subgroup $G'$ is abelian.\\
 \end{theorem}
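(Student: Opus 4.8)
The plan is to prove Theorem 1.3 by invoking Theorem 1.2 and showing that in each of the three cases, option (2) of Theorem 1.2 cannot occur, forcing $G \cong A$. So the starting point is: suppose $C(G) = C(A)$ with $A \cong C_{p^n} \times C_p$. Theorem 1.2 tells us $G$ is a $p$-group and either $G \cong A$ (and we are done) or $G$ satisfies the stringent structural conditions in (2): $|G:G'| = p^2$, $p > 2$, $Z(G)$ is noncyclic, and there is a unique maximal subgroup $X \unlhd G$ of $G'$ with $G/X$ nonabelian of order $p^3$ and exponent $p$. The entire task is therefore to derive a contradiction from condition (2) under each of the three extra hypotheses. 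First I would record the consequences of $|G:G'| = p^2$: since $G$ is a $p$-group, $G/G'$ is generated by two elements, hence $G$ itself is a two-generator $p$-group, and the Frattini quotient $G/\Phi(G)$ has order $p^2$.

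For part (1), assuming $G$ has a metacyclic maximal subgroup $M$, I would exploit the fact that $G$ is two-generated together with the metacyclicity of $M$ to show that $G$ is itself metacyclic, or at least close enough to invoke the metacyclic case already established in the text (the discussion before Theorem 1.2 asserts that a metacyclic $G$ with $C(G) = C(A)$ must satisfy $G \cong A$). The key point is that a two-generator $p$-group with a metacyclic maximal subgroup is metacyclic; this is a known structural fact, and once $G$ is metacyclic the metacyclic result forces $G \cong A$, contradicting the noncyclicity of $Z(G)$ in option (2) (since $C_{p^n}\times C_p$ and any group isomorphic to it cannot simultaneously satisfy $|G:G'|=p^2$ with nonabelian $G$). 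For part (2), assuming $G'$ is two-generated, I would analyze the tower between $G'$ and its unique maximal subgroup $X$: the factor $G/X$ having order $p^3$ pins down $|G':X| = p$, and combined with $|G:G'| = p^2$ I would try to show $G'$ must in fact be cyclic, contradicting either the structure forced by the codegree multiset of $A$ or the exponent-$p$ nonabelian quotient $G/X$. For part (3), assuming $G'$ abelian (so $G$ is metabelian), I would use the codegree data transported from $A$—in particular the specific multiplicities $m_A(d)$ that $C(G)$ must reproduce—to constrain $|G'|$ and the character degrees, again clashing with the requirement that $G/X$ be nonabelian of exponent $p$.

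The main obstacle I expect is part (3), the metabelian case, because unlike (1) and (2) it does not immediately hand us a generation bound that collides with option (2)'s structure; instead I will have to extract quantitative information from the equality $C(G) = C(A)$ itself. Concretely, the codegree multiset of $A \cong C_{p^n} \times C_p$ is completely explicit (the codegrees are the orders of the cyclic quotients, with known multiplicities counting linear characters of each codegree), so matching $C(G)$ to it forces $|G|$, the number of irreducible characters, and the distribution of codegrees of $G$ to agree with $A$'s. For a metabelian $p$-group these numerical constraints, filtered through the class-counting and codegree formulas, should be rigid enough to force $G' $ to be cyclic and $G$ to have an abelian structure incompatible with the nonabelian exponent-$p$ quotient $G/X$; the delicate part is carrying out this counting carefully enough to rule out option (2) rather than merely narrowing it. In all three parts the unifying strategy is the same: hypotheses (1)–(3) are exactly the conditions that let me upgrade the generic two-generator $p$-group $G$ into one whose derived structure is too simple to support the nonabelian exponent-$p$ section demanded by Theorem 1.2(2), so the contradiction always arrives by confronting the forced quotient $G/X$ with the restricted shape of $G'$.
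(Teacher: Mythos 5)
Your high-level skeleton (invoke Theorem \ref{thm1.2}, then rule out its option (2) under each of hypotheses (1)--(3)) is the same as the paper's, but the execution has genuine gaps in all three cases, and the decisive mechanism is missing. In case (1), the ``known structural fact'' you lean on --- that a two-generator $p$-group with a metacyclic maximal subgroup is metacyclic --- is false: for odd $p$ the nonabelian group of order $p^3$ and exponent $p$ is two-generated, every maximal subgroup is $C_p\times C_p$ (metacyclic, even abelian), yet the group itself is not metacyclic. Ironically, this is exactly the kind of group that appears as the quotient $G/X$ in Theorem \ref{thm1.2}(2), so your proposed route collapses precisely in the situation you need to exclude. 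The paper's argument is much softer: $G'\leq \Phi(G)\leq M$, and subgroups of metacyclic groups are metacyclic, so $G'$ is metacyclic, hence two-generated --- which reduces case (1) to case (2). In case (2), your stated target (force $G'$ to be cyclic) is both unsupported (you give no mechanism) and not the right goal; the missing key lemma is Blackburn's theorem: if a $p$-group $G$ and its derived subgroup $G'$ are both generated by two elements, then $G'$ is \emph{abelian}. Since $|G:G'|=p^2$ makes $G$ two-generated, this reduces case (2) to case (3).

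That leaves case (3), where your proposal is only a hope that counting constraints from $C(G)=C(A)$ ``should be rigid enough''; no actual contradiction is derived. The paper's endgame is short and concrete, and you need some version of it: once $G'$ is abelian, It\^{o}'s theorem gives $\chi(1)\mid |G:G'|=p^2$ for every $\chi\in\mathrm{Irr}(G)$, so $\mathrm{cd}(G)\subseteq\{1,p,p^2\}$. Lemma \ref{lem2.1} (nonabelian $G$ with $C(G)=C(A)$ has at least three degrees) then forces $\mathrm{cd}(G)=\{1,p,p^2\}$, and Lemma \ref{lem2.2} forces $p=2$ --- directly contradicting the conclusion $p>2$ of Theorem \ref{thm1.2}(2). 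So the structure of a correct proof is: hypotheses (1) and (2) each reduce to (3) via Blackburn, and (3) is killed by It\^{o} plus Lemmas \ref{lem2.1} and \ref{lem2.2}. Your proposal contains neither Blackburn's theorem nor the It\^{o}/Lemma \ref{lem2.2} step, so as written it does not yield a proof in any of the three cases.
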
 
The work in this paper was completed by the second author (P.h.D student) under the supervision of the first author at ***. The contents of this paper may appear as part of the second author's P.h.D dissertation.

\section{Main Results}
In this section, we start by stating a fact that will be used frequently. Theorem A in \cite{Qian1} yields that if $G$ is a group such that ${\rm Cod}(G)$ is a set of powers of a prime $p,$  then $G$ is a $p$-group. Now we prove the following basic lemmas.

\begin{lemma}\label{lem2.1}Let $G$ be a nonabelian group and $A$ be an abelian $p$-group of order $p^a$ for some prime $p$. Suppose that $C(G)=C(A).$ Then $|{\rm cd}(G)|\geq 3.$
\end{lemma}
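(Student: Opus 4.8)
The plan is to argue by contradiction, exploiting the single numerical invariant that $C(G)=C(A)$ most directly controls: the total number of irreducible characters. Since ${\rm Cod}(G)={\rm Cod}(A)$ is a set of powers of $p$, the cited Theorem A of \cite{Qian1} shows that $G$ is a $p$-group; and since the two pseudo-algebras agree as multisets, summing all the multiplicities gives $k(G)=\sum_{d}m_G(d)=\sum_d m_A(d)=k(A)=|A|=p^a$. Thus the whole problem reduces to showing that a nonabelian $p$-group with at most two character degrees cannot have $k(G)$ equal to a power of $p$.

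As $G$ is nonabelian we have $|{\rm cd}(G)|\ge 2$, so it suffices to rule out $|{\rm cd}(G)|=2$. Assume ${\rm cd}(G)=\{1,p^c\}$ with $c\ge 1$, write $|G:G'|$ for the number of linear characters, let $N$ be the number of irreducible characters of degree $p^c$, and set $|G'|=p^g$ with $g\ge 1$. The plan is to combine the two identities $|G|=|G:G'|+Np^{2c}$ (from $\sum_\chi\chi(1)^2=|G|$) and $|G|=|G'|\,|G:G'|$ to obtain $Np^{2c}=|G:G'|(p^g-1)$. Since $p^g-1$ is coprime to $p$ and $N$ is an integer, this forces $p^{2c}\mid|G:G'|$; writing $|G:G'|=p^{2c}\ell$ with $\ell$ a power of $p$ yields $N=\ell(p^g-1)$ and hence
\[
k(G)=|G:G'|+N=\ell\bigl(p^{2c}+p^g-1\bigr).
\]
The key observation is then purely arithmetic: because $c\ge 1$ and $g\ge 1$, the factor $p^{2c}+p^g-1\equiv -1\pmod p$ is coprime to $p$ and strictly larger than $1$, so $k(G)$ carries a prime divisor different from $p$ and cannot be a power of $p$. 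This contradicts $k(G)=p^a$, and therefore $|{\rm cd}(G)|\ge 3$.

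I expect the only delicate point to be the integrality step $p^{2c}\mid|G:G'|$: one must check that $N$ is a genuine positive integer (guaranteed because $G$ is nonabelian, so characters of degree $p^c$ do exist) and that $\gcd(p^{2c},p^g-1)=1$, which is exactly where the hypothesis $g\ge 1$ (that is, $G'\ne 1$) enters. Everything else is bookkeeping with the two displayed identities; no structure theory for two--degree $p$-groups is required, only the fact that the character degrees and each of $|G|$, $|G'|$, $|G:G'|$ are powers of $p$.
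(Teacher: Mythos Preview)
Your proof is correct and follows essentially the same approach as the paper: both reduce to the identity $|G|=|G:G'|+N\,p^{2c}$ for a nonabelian $p$-group with ${\rm cd}(G)=\{1,p^c\}$ and derive an arithmetic contradiction modulo $p$ with $k(G)=p^a$. The paper substitutes $k(G)=p^a$ directly to obtain $p^{n-r}-1=(p^{a-r}-1)p^{2e}$, whereas you first solve for $N$ using $|G|=|G'|\,|G:G'|$ and then factor $k(G)=\ell(p^{2c}+p^{g}-1)$; the two manipulations are equivalent rearrangements of the same counting identity.
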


\begin{proof} Since $A$ is abelian, we have that $A\cong {\rm Irr}(A)$ and so ${\rm Cod}(A)$ coincides with the set of element orders of $A$. Hence ${\rm Cod}(G)={\rm Cod}(A)$ is a set of powers of $p.$ It follows that $G$ is a $p$-group. 
Assume that $|{\rm cd}(G)|<3.$ Then $|{\rm cd}(G)|=2$ as $G$ is nonabelian. So we may assume ${\rm cd}(G)=\{1, p^e\}$ for some positive integer $e.$ Let $|A|=p^a$, $|G|=p^n$ and $|G:G'|=p^r.$ By $C(G)=C(A),$ we have that $k(G)=k(A)=p^a.$ Then $p^r<p^a<p^n.$ Notice that $|G|=|G:G'|+(k(G)-|G:G'|)p^{2e}.$ Thus $p^{n-r}-1=(p^{a-r}-1)p^{2e},$ contrary to the fact that $p^{2e}$ does not divide $p^{n-r}-1.$ Hence $|{\rm cd}(G)|\geq 3,$ as wanted.
\end{proof}

Now we delve deeper into the degree set of $G.$ In particular, we consider the case when ${\rm cd}(G)=\{1,p,p^2\}.$ 

\begin{lemma}\label{lem2.2} 
 Let $G$ be a nonabelian group and $A$ be an abelian $p$-group of order $p^a$ for some prime $p$. Suppose that $C(G)=C(A)$ and ${\rm cd}(G)=\{1,p,p^2\}.$ Then $p=2$ and $|G|=2^{a+2}.$ In particular, if we write $|G:G'|=p^r$ and $k_1=|\{\chi\in {\rm Irr}(G)\mid \chi(1)=p\}|$ and $k_2=|\{\chi\in {\rm Irr}(G)\mid \chi(1)=p^2\}|,$ then $k_1=p^a-p^r-p^{r-2}$ and $k_2=p^{r-2}.$ 
\end{lemma}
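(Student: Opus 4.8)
The plan is to set up counting identities from $C(G)=C(A)$ and force $p=2$ via a divisibility obstruction, exactly in the style of Lemma~\ref{lem2.1}. First I would record the basic consequences of the hypothesis. Since $A$ is abelian of order $p^a$, we have $k(G)=k(A)=p^a$, and $\mathrm{Cod}(A)$ is the set of element orders of $A$, so $\mathrm{Cod}(G)$ consists of powers of $p$; in particular $G$ is a $p$-group. Writing $|G:G'|=p^r$, the number of linear characters is $p^r$, and with ${\rm cd}(G)=\{1,p,p^2\}$ the class-number count gives $p^r+k_1+k_2=p^a$, while the order equation reads
\begin{equation}\label{eq:order}
|G|=p^r+k_1 p^2+k_2 p^4.
\end{equation}
These two linear relations in $k_1,k_2$ are the backbone of the argument.

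Next I would bring in codegree information to pin down $|G|$. The characters of degree $1$ are exactly the linear ones, contributing the codegrees that are element orders of $G/G'$; since $C(G)=C(A)$ and $A\cong C_{p^a}$-type data force these to match the divisor structure of $A$, the largest codegree appearing, namely $p^{a}$ (or the appropriate top element order of $A$), must be realized in $G$. The key point is that a character of degree $p^2$ has codegree $|G:\ker\chi|/p^2$, and matching the multiset of codegrees of $G$ to that of the abelian group $A$ constrains $|G|$ in terms of $a$. I expect this matching to yield $|G|=p^{a+2}$ directly: the degree-$p^2$ characters must be faithful (or nearly so) to produce the top codegrees demanded by $C(A)$, which fixes $p^{n}=p^{a+2}$.

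With $|G|=p^{a+2}$ in hand, I would substitute into the two relations. From $p^r+k_1+k_2=p^a$ and \eqref{eq:order} with $|G|=p^{a+2}$, eliminating $k_1$ gives
\begin{equation}\label{eq:elim}
p^{a+2}-p^r=(p^a-p^r-k_2)p^2+k_2 p^4,
\end{equation}
which simplifies to a relation expressing $k_2$ as $k_2=p^{r-2}$ and then $k_1=p^a-p^r-p^{r-2}$, provided $p^2\mid p^r$, i.e.\ $r\ge 2$. The divisibility that remains, analogous to Lemma~\ref{lem2.1}, is a congruence forcing $p^{?}-1$ to be divisible by a power of $p$ unless $p=2$; concretely the equation after clearing the common factor will read $p^{a+2-r}-1=(p^{a-r}-1)p^2+(p^2-1)p^2 k_2/p^{r}$ type expression, and the only way to reconcile the $-1$ terms modulo $p$ is $p=2$.

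The main obstacle, and the step I would spend the most care on, is the codegree-matching that delivers $|G|=p^{a+2}$ and simultaneously rules out $p$ odd. For odd $p$, I anticipate needing to show that the multiset of codegrees forced by $C(A)$ cannot be produced with degree set $\{1,p,p^2\}$: the abelian group $A$ has exactly one character of each codegree equal to its largest element order with the correct multiplicity, and reproducing that multiplicity profile with nonlinear characters of degrees $p$ and $p^2$ creates a parity/divisibility clash that only $p=2$ survives. I would isolate this as the crux, verify the clean formulas $k_2=p^{r-2}$ and $k_1=p^a-p^r-p^{r-2}$ as the arithmetic consequence, and present the $p=2$ conclusion as the resolution of the divisibility obstruction, mirroring the final contradiction step in Lemma~\ref{lem2.1}.
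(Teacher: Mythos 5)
There is a genuine gap, and it is fatal to the proposed strategy for the key conclusion $p=2$. You hope that, as in Lemma~\ref{lem2.1}, a divisibility/parity clash in the counting equations will rule out odd $p$. No such clash exists. Once $|G|=p^{a+2}$ is known, the two relations $p^r+k_1+k_2=p^a$ and $p^{a+2}=p^r+k_1p^2+k_2p^4$ have the nonnegative integer solution $k_2=p^{r-2}$, $k_1=p^a-p^r-p^{r-2}$ for \emph{every} prime $p$ (and every $r\ge 2$); subtracting the first from the second divided by $p^2$ gives exactly $k_2(p^2-1)=p^r-p^{r-2}$, which is consistent for all $p$. So the arithmetic you set up determines $k_1,k_2$ but cannot distinguish $p=2$ from $p$ odd. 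The paper's proof of $p=2$ is not arithmetic at all: after pinning down $|G|=p^{a+2}$, it invokes Theorem~3 of Gavioli--Mann--Monti--Previtali--Scoppola \cite{GMMP1998}, a structural classification result implying that for odd $p$ there is no $p$-group of order $p^{a+2}$ with exactly $p^a$ conjugacy classes. That external theorem is the missing idea; no amount of codegree bookkeeping of the kind you describe will produce it (indeed, the existence of the order-$128$ examples in the paper shows the counting profile is genuinely realizable when $p=2$, so only a deep group-theoretic input can exclude odd $p$).

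There is also a secondary, repairable gap: your derivation of $|G|=p^{a+2}$ rests on an unproved claim that the degree-$p^2$ characters ``must be faithful (or nearly so),'' which does not follow from the hypotheses. The paper's actual argument is elementary: since $k_1\ge 1$ and $k_1+k_2=p^a-p^r$, the order equation gives $|G|\le p^r+p^2+(p^a-p^r-1)p^4<p^{a+4}$, so $|G|=p^{a+i}$ with $i\in\{1,2,3\}$; then $|G|-k(G)=k_1(p^2-1)+k_2(p^4-1)$ shows $(p^2-1)\mid p^a(p^i-1)$, hence $(p^2-1)\mid(p^i-1)$, which forces $i$ even, i.e.\ $i=2$. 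You gesture at this divisibility but never establish the upper bound that makes it conclusive; that part of your outline can be completed along the paper's lines, whereas the $p=2$ step cannot.
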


\begin{proof} Following the same reasoning process as in Lemma \ref{lem2.1}, $G$ is a $p$-group. Since $C(G)=C(A)$, we have that $k(G)=k(A)=p^a$ and so $|G|>p^a$ as $G$ is nonabelian. 
On the other hand, we have that
\begin{eqnarray*}
|G|&=&|G:G'|+k_1p^{2}+k_2p^4\\
&\leq& p^r+p^2+(p^a-p^r-1)p^4\\
&=& p^{a+4}-p^{r+4}+p^r-p^4+p^2\\
&<&p^{a+4}.
\end{eqnarray*}
Hence, $|G|=p^{a+1},p^{a+2},\ {\rm or}\ p^{a+3}.$ Notice that $|G:G'|=k(G)-k_1-k_2$ and so $|G|=k(G)+k_1(p^2-1)+k_2(p^4-1).$ Hence $(p^2-1)\mid (|G|-k(G))$, which indicates that $|G|=p^{a+2}.$  By \cite[Theorem 3]{GMMP1998}, such groups do not exist if $p$ is odd. Hence, $p=2.$ Now it is easy to see that $k_1=p^a-p^r-p^{r-2}$ and $k_2=p^{r-2}$. 
\end{proof}

With the above lemmas, we are prepared to provide an example for Theorem \ref{thm1.1}, and it is advisable to look at $2$-groups. Let $A\cong C_{2^3}\times C_2\times C_2$ be an abelian $2$-group. Then $C(A)=\{(1,1), (2,7), (2^2,8), (2^3,16)\}.$ Since $\chi(1)<{\rm cod}\chi$ for all non-principal characters $\chi$ of $G,$ it follows that ${\rm cd}(G)$ is a subset of $\{1,2,2^2\}.$  By Lemma \ref{lem2.1} and Lemma \ref{lem2.2}, if there is a group $G$ so that  $C(G)=C(A)$, then $|G|=2^7$ and ${\rm cd}(G)=\{1,2,4\}.$ We notice that such a group does exist. For example, using GAP, $G$ can be one of {\rm SmallGroup}$(128,755)$, {\rm SmallGroup}$(128,756),$ {\rm SmallGroup}$(128,773)$. 

Since there is a counterexample when $A$ has three generators, we move on to the case when $A=C_{p^n}\times C_p$, where $p$ is a prime. If $G$ is a group satisfying that $C(G)=C(A)$, then by \cite[Lemma 3.3]{Moreto2023}, $G/\Phi(G)\cong A/\Phi(A)\cong C_p\times C_p,$ and hence $G$ also has two generators. We now consider a class of two-generator groups: metacyclic groups. We give the following result.

\begin{theorem}\label{thm2.3}
Let $G$ be a metacyclic group and  $A=C_{p^n}\times C_p$, where $p$ is a prime. Suppose that $C(G)=C(A).$ Then   $G$ is abelian and $G\cong A.$
\end{theorem}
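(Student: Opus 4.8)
The plan is to show that $G$ must be abelian; once this is done, $C(G)=C(A)$ forces $G\cong A$, because the pseudo-algebra of an abelian $p$-group records exactly its multiset of element orders, and abelian groups are determined by their pseudo-algebra by \cite{Moreto2023}. First I extract the combinatorial data. Exactly as in the proof of Lemma~\ref{lem2.1}, $\mathrm{Cod}(A)$ is the multiset of element orders of $A=C_{p^n}\times C_p$, so $\mathrm{Cod}(G)=\{1,p,\dots,p^n\}$, the largest codegree is $p^n$, and the multiplicity of the codegree $p^k$ is $p^2-1$ for $k=1$ and $p^k(p-1)$ for $2\le k\le n$; in particular $k(G)=k(A)=p^{n+1}$ and $G$ is a $p$-group. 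Since $\chi(1)<\mathrm{cod}\,\chi$ for every nonprincipal character of a $p$-group, all degrees lie in $\{1,p,\dots,p^{n-1}\}$, and the argument used in Lemma~\ref{lem2.2} shows a character of codegree $p$ must be linear, so every nonlinear irreducible character of $G$ has codegree at least $p^2$. The case $n=1$ is immediate, so from now on I assume $n\ge 2$.

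Next I bring in the metacyclic hypothesis through a cyclic normal subgroup $N$ with $G/N$ cyclic. Since $N$ is abelian normal with $G/N$ cyclic, every $\chi\in\mathrm{Irr}(G)$ is induced from a linear character of a subgroup lying between $N$ and $G$; consequently every degree divides $[G:N]$, and $G'$ is cyclic. Write $G/G'\cong C_{p^s}\times C_{p^t}$ with $s\ge t\ge 1$ (the abelianization is two-generated, consistent with \cite[Lemma~3.3]{Moreto2023}). The linear characters of $G$ are precisely the characters of $G/G'$, and their codegrees, counted with multiplicity, form the multiset of element orders of $C_{p^s}\times C_{p^t}$, whose largest entry is $p^s$.

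I then pin down $t=1$. The multiplicity of the codegree $p^2$ in $C(G)$ is $p^3-p^2$, while the number of elements of order $p^2$ in $C_{p^s}\times C_{p^t}$ equals $p^4-p^2>p^3-p^2$ as soon as $t\ge 2$; since these are all linear contributions, they cannot exceed the available multiplicity, forcing $t=1$. With $t=1$ the element-order counts of $C_{p^s}\times C_p$ match the multiplicities of $C(A)$ exactly at every level $p^k$ with $k\le s$, so the $p^{s+1}$ linear characters account for all characters of codegree at most $p^s$, and every nonlinear irreducible character has codegree in $\{p^{s+1},\dots,p^n\}$. If $s=n$, the linear characters already exhaust all $p^{n+1}=k(G)$ irreducible characters, forcing $G'=1$; thus $G$ is abelian and $G\cong G/G'\cong C_{p^n}\times C_p=A$, as wanted.

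The remaining case $s<n$ is the crux. Here $G/G'\cong C_{p^s}\times C_p$, and inspecting the metacyclic invariants shows this forces either $[G:N]=p$ or $|N/G'|=p$. In the first sub-case every nonlinear degree divides $[G:N]=p$, hence equals $p$, so their number is $k_1=p^{n+1}-p^{s+1}$ and $|G|=p^{s+1}+k_1p^2$; combining with $|G|=[G:G']\,|G'|=p^{s+1}|G'|$ gives $|G'|=1+p^{n+2}-p^{s+2}$, which is congruent to $1$ modulo $p$ and larger than $1$, contradicting that $|G'|$ is a nontrivial power of $p$. The second sub-case is treated in the same spirit, using the identity $|G|=[G:G']+\sum_{\chi\ \mathrm{nonlinear}}\chi(1)^2=p^{s+1}|G'|$ together with the explicit degree and codegree distribution of these metacyclic groups to produce the same kind of divisibility contradiction; the arguments are counting-theoretic and so cover $p=2$ as well. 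I expect this last sub-case to be the main obstacle: as the nonabelian group of order $p^3$ and exponent $p^2$ already illustrates, a single nonlinear character can have codegree strictly larger than $\exp(G/G')$, so the contradiction cannot come from one character and must instead be drawn from the full multiset via the metacyclic classification and the order identity above. Once $s<n$ is ruled out, only $s=n$ remains, and then $G\cong A$.
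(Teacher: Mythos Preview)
Your proposal has a genuine gap. The ``second sub-case'' ($|N/G'|=p$) is not carried out; you only announce an intention and explicitly flag it as ``the main obstacle.'' That is precisely where the content lies, since in this sub-case the degree set can be all of $\{1,p,\dots,p^{s}\}$ and no single counting identity yields a contradiction. In addition, the dichotomy ``$[G:N]=p$ or $|N/G'|=p$'' is asserted by ``inspecting the metacyclic invariants'' but not proved; it does not follow from the shape of $G/G'\cong C_{p^s}\times C_p$ alone (for every $1\le u\le s$ that abelian group admits a cyclic subgroup of order $p^u$ with cyclic quotient, e.g.\ $\langle g^{p^{s-u}}h\rangle$), so at the very least you must specify which $N$ you take and argue that one of the two extremes can always be arranged. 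Your first sub-case is fine---indeed $|{\rm cd}(G)|=2$ there already contradicts Lemma~\ref{lem2.1}; note that the displayed value of $|G'|$ should be $1+p^{\,n-s+2}-p^{2}$, though your $p$-divisibility conclusion survives the slip.

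By contrast, the paper disposes of odd $p$ in one stroke by quoting \cite[Corollary~2.3]{HK2011}: the class number of a nonabelian metacyclic $p$-group with $p$ odd is never a $p$-power, contradicting $k(G)=p^{n+1}$. For $p=2$ it first shows $|G'|\ge 4$ and $|G:G'|\ge 8$, then invokes King's theorem \cite{King1973} to pass to a split metacyclic quotient $\overline G=G/S$, and Blackburn's lemma \cite[Lemma~2.2]{Black1958} to reach a further quotient $\tilde G$ with $|\tilde G'|=2$; the codegree bound forces every nonlinear $\chi\in{\rm Irr}(\tilde G)$ to be faithful, so $\tilde G'$ is the unique minimal normal subgroup, whence $\Phi(\tilde G)=Z(\tilde G)$, and the split complement then makes $\tilde G$ abelian, a contradiction. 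If you want to complete your counting approach you will essentially need a class-number or degree-multiplicity formula for metacyclic $2$-groups---i.e., the very structural input the paper imports from \cite{HK2011} and \cite{King1973}.
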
  

\begin{proof}[Proof of Theorem \ref{thm2.3}] Assume that $G$ is nonabelian. Clearly $G$ is a $p$-group and $k(G)=k(A)=p^{n+1}.$ Since $p^2-1 \mid (|G|-k(G))$, we have that $|G|\geq p^{n+3}.$ If $p$ is odd, then it follows from  \cite[Corollary 2.3]{HK2011} that $k(G)$ is not a power of $p,$ a contradiction. Hence $p=2.$ 
Since $G/\Phi(G)\cong A/\Phi(A)\cong C_2 \times C_2$ and $C(G)=C(A),$ $G/G'$ must be isomorphic to a subgroup of $A$. So we may assume that $G/G'\cong C_{2^{a-1}}\times C_2,$ where $2\leq a\leq n.$ Since $C(G)=C(A)=\{(1,1),(2,3),(2^2,2^2),...,(2^n,2^n)\}$ and $C(G/G')=\{(1,1),(2,3),(2^2,2^2),...,(2^{a-1},2^{a-1})\},$ we have that ${\rm cod}\chi\geq 2^a$ for any nonlinear irreducible $\chi$ of $G$. 

We first claim that $|G'|\geq 2^2$ and $a\geq 3.$ Let $|G|=2^{m}.$ If $|G'|=2,$ then $G/G'\cong C_{2^{m-2}}\times C_2$ and so there is an irreducible character $\chi$ of $G$ such that ${\rm cod}\chi=2^{m-2}.$ Notice that $m\geq n+3$ and so $2^{m-2}\geq 2^{n+1}.$ This is impossible because the largest codegree of $G$ is $2^n.$ Now assume that $|G:G'|=4.$ Notice that nonabelian $2$-groups with $|G:G'|=4$ have been classified {\rm (see \cite[Proposition 1.6]{Berkovich2008})}. In particular, the number of conjugacy classes of such groups is not a power of $2.$   Hence $|G:G'|>4$ and so the claim holds. 

Notice that \cite[Theorem 4.6]{King1973} implies that $G$ has a unique minimal normal subgroup, say $S$, in its derived group such that $G/S$ splits. Now we consider the factor group $\overline{G}=G/S.$ Thus we may assume that $\overline{G}=\overline{N}\cdot\overline{K}$ with $\overline{N}\cap\overline{K}=1$, where $\overline{N}\unlhd \overline{G}$ and $\overline{K}\leq \overline{G}.$  
Clearly, $\overline{G}'\leq \overline{N}$ and $\overline{G}/\overline{G'}\cong G/G'.$ Hence by the above claim we have that $|K|>2$ and $|\overline{N}:\overline{G}'|=2.$ 

 Based on a Blackburn's result \cite[Lemma 2.2]{Black1958}, we let $\overline{X}$ be the unique maximal subgroup of $\overline{G}'$ which is normal in $\overline{G}.$ Consider the factor group $\Tilde{G}=\overline{G}/\overline{X}=\overline{N}/\overline{X}\cdot \overline{K}\overline{X}/\overline{X}=\Tilde{N}\cdot\Tilde{K}.$ Clearly, $|\Tilde{G}|=2^{a+1},$ $|\Tilde{G}'|=2$ and $\Tilde{G}/\Tilde{G}'\cong G/G'\cong C_{2^{a-1}}\times C_2.$ Since for any irreducible character $\chi\in {\rm Irr}(\Tilde{G})$ with $\chi(1)>1$, ${\rm cod}\chi=\frac{2^{a+1}}{\chi(1)|{\rm ker}\chi|}\geq 2^a.$ Then $\chi(1)\leq 2$ and ${\rm ker}\chi=1.$ This implies that ${\rm cd}(\Tilde{G})=\{1,2\}$ and all nonlinear irreducible characters of $\Tilde{G}$ are faithful. Hence $\Tilde{G}'$ is the unique minimal normal subgroup of $\Tilde{G}.$ Since $\Tilde{G}$ is a 2-group, we have that $\Phi(\Tilde{G})=\langle x^2\mid x\in \Tilde{G}\rangle.$ Let $x,y\in \Tilde{G},$ then by the fact $|\Tilde{G}'|=2$ we have that $1=[x,y]^2=[x,y^2]$ and so $y^2\in Z(\Tilde{G}).$ Hence $\Phi(\Tilde{G})\leq Z(\Tilde{G}).$ As $\Tilde{G}/\Phi(\Tilde{G})\cong C_2\times C_2$ and $\Tilde{G}$ is not abelian, it follows that $\Phi(\Tilde{G})=Z(\Tilde{G}).$ If $\Tilde{K}\cap Z(\Tilde{G})>1,$ then by the uniqueness of $\Tilde{G}'$, $\Tilde{G}'\leq \Tilde{K}\cap Z(\Tilde{G})\leq \Tilde{K},$ contrary to the fact $\Tilde{K}\cap \Tilde{G}'\leq \Tilde{K}\cap \Tilde{N} =1.$ Hence $\Tilde{K}\cap Z(\Tilde{G})=1.$ Since $|\Tilde{G}:\Tilde{K}Z(\Tilde{G})|=(|\Tilde{G}|/|Z(\Tilde{G})|)/|\Tilde{K}|\leq 1$ as $|\Tilde{K}|=|\overline{K}|\leq 2^2,$ $\Tilde{G}=\Tilde{K}Z(\Tilde{G})$ and so $\Tilde{G}/Z(\Tilde{G})\cong \Tilde{K}$ is cyclic. It follows that $\Tilde{G}$ is abelian. This is a contradiction. 

Therefore, $G$ is abelian. Then $G\cong {\rm Irr}(G)$ and so $C(G)$ determines the number of elements of each order. It follows that $G\cong A.$ Now the proof is complete.
\end{proof}

Next, we give a proof of Theorem \ref{thm1.2}. 

\begin{proof}[Proof of Theorem \ref{thm1.2}] If $G$ is abelian, then $(1)$ follows. Assume now that $G$ is nonabelian and $G/G'\cong C_{p^{a-1}}\times C_p,$ where $2\leq a\leq n.$  A proof similar to Theorem \ref{thm2.3} shows that $G$ is a $p$-group, $k(G)=k(A)=p^{n+1}$ and $|G|\geq p^{n+3}.$
Since $G$ is a nonabelian $p$-group with two generators, by \cite[Lemma 2.2]{Black1958} we can let $X$ be the unique maximal subgroup of $G'$ which is normal in $G.$ Consider the factor group $\overline{G}=G/X.$ Then $|\overline{G}|=p^{a+1}$, $\overline{G}/\overline{G}'\cong G/G',$ and $\overline{G}'=G'/X$ has order $p.$  Notice that for any irreducible character $\chi\in {\rm Irr}(\overline{G})$ with $\chi(1)>1$, ${\rm cod}\chi=\frac{p^{a+1}}{\chi(1)|{\rm ker}\chi|}\geq p^a.$ Then $\chi(1)\leq p$ and ${\rm ker}\chi=1.$ This implies that ${\rm cd}(\overline{G})=\{1,p\}$ and all nonlinear irreducible characters are faithful. Hence $\overline{G}'$ is the unique minimal normal subgroup of $\overline{G}.$  

Write $\overline{G}/\overline{G}'=\overline{C}/\overline{G}'\times \overline{D}/\overline{G}',$ where $\overline{C}/\overline{G}'\cong C_{p^{a-1}}$ and $\overline{D}/\overline{G}'\cong C_p.$ Next, we will discuss in two cases.

{\bf Case 1: $|G:G'|>p^2.$} 

We claim that that $G$ is metacylic.   By \cite[Theorem 2.3]{Black1958}, we only need to show that $\overline{G}$ is metacyclic. If $\Phi(\overline{C})=1,$ then $\overline{C}$ is elementary abelian and so $\overline{C}/\overline{G}'\cong C_p$ and $a=2$, contrary to $|G:G'|>p^2.$  As $\Phi(\overline{C})$ char $\overline{C} \unlhd \overline{G}$, it follows that $\Phi(\overline{C}) \unlhd \overline{G}.$ Hence by the uniqueness of $\overline{G}'$, we have that $\overline{G}'\leq \Phi(\overline{C}).$ Notice that $(\overline{C}/\overline{G}')/(\Phi(\overline{C})/\overline{G}')\cong \overline{C}/\Phi(\overline{C})$ is cyclic. Then $\overline{C}$ is cyclic. Since $\overline{G}/\overline{C}$ is cyclic, $\overline{G}$ is metacylic. Hence, the above claim holds. It follows from Theorem \ref{thm2.3} that $G$ is abelian, which is a contradiction. Hence, this case cannot happen.
 
{\bf Case 2: $|G:G'|=p^2.$} 

 If $p=2,$ then $|G:G'|=4$ and such groups have been classified. By the equation $|G|=|G:G'|+(k(G)-|G:G'|)\cdot 2^2$, it is easy to see that $k(G)$ is not a power of $2,$ contrary to the hypothesis $C(G)=C(A).$ Hence $p>2.$ Since $\overline{G}/\overline{G}'\cong G/G'\cong C_p\times C_p,$ it follows that $\overline{G}$ has order $p^3.$ Now we only need to show that $\overline{G}$ has exponent $p.$ If $\overline{G}$ has exponent $p^2$, then the group $\overline{C}$ defined above is cyclic of order $p^2$ and hence $\overline{G}$ is metacyclic and so $G$ must be metacylic. It follows from Theorem \ref{thm2.3} that $G$ is abelian, which is a contradiction. Hence $\overline{G}$ is of exponent $p$ and so (2) follows. 
\end{proof}

By Lemma \ref{lem2.1} and Lemma \ref{lem2.2}, we have that if $A\cong C_{p^3}\times C_p$ and $G$ is a group satisfying $C(G)=C(A),$ then $G\cong A.$ Theorem \ref{thm1.3} immediately follows from Theorem \ref{thm1.2} and a result of Blackburn. He proved that if a $p$-group $G$ and its derived subgroup $G'$ are generated by two elements, then $G'$ is abelian (see \cite[Theorem 4]{Black1957}).   

\begin{proof}[Proof of Theorem \ref{thm1.3}] If $G$ is abelian, there is nothing to prove. If $G$ is not isomorphic to $A$, then by Theorem \ref{thm1.2}, we have that $|G:G'|=p^2$ and $p>2.$ If $G$ has a metacyclic maximal subgroup $M$, then $G'\leq \Phi(G)$ is a subgroup of $M$ and so $G'$ is metacyclic, which indicates that $G'$ is generated by two elements. Hence by Blackburn's result, $G'$ is abelian. Now we have that $G'$ is abelian in all three cases. Notice that $\chi(1)\mid |G:G'|$ for all irreducible characters $\chi$ of $G.$ Then ${\rm cd}(G)$ is a subset of $\{1,p,p^2\}.$ It follows from Lemma \ref{lem2.1} and \ref{lem2.2} that $p=2.$ This is a contradiction. Therefore, $G\cong A,$ as desired. 
\end{proof}


\begin{thebibliography}{0}

\bibitem{Berkovich2008}
B. Berkovich, Groups of Prime Power Order, Volume 1, Walter de Gruyter, Berlin, 2008

\bibitem{Black1957}
N. Blackburn, On prime-power groups in which the derived group has two generators, {\it Proc. Camb. Philos. Soc.} {\bf 53} (1957), 19-27.

\bibitem{Black1958}
N. Blackburn, On prime-power groups with two generators. {\it Math. Proc. Cambridge Philos. Soc.} {\bf 54(3)} (1958), 327-337.


\bibitem{Du2016}
N. Du and M. L. Lewis, Codegrees and nilpotence class of $p$-groups, {\it J. Group Theory} {\bf 19} (2016), 561–567.

\bibitem{GMMP1998}
N. Gavioli, A. Mann, V. Monti, A. Previtali and C. M. Scoppola, Groups of prime power order with many conjugacy classes, {\it J. Algebra} {\bf 202} (1998), 129-141.




\bibitem{Hempel2000}
C. E. Hempel, Metacyclic groups, {\it Comm. Algebra} {\bf 28(8)} (2000), 3865–3897.

\bibitem{HK2011}
L. Hethelyi and B. Kulshammer, Characters, conjugacy classes and centrally large subgroups of $p$-groups of small rank, {\it J. Algebra} {\bf 340} (2011), 199-210.


\bibitem{HS2015}
S. P. Humphries and D. C. Skabelund, Character tables of metacyclic groups, {\it Glasgow Math. J} {\bf 57(02)} (2015), 387-400. 


\bibitem{Isaacs2011}
I. M. Isaacs, Element orders and character codegrees, {\it Arch. Math.} {\bf 97} (2011), 499–501.

\bibitem{King1973}
B. King, Presentations of metacyclic groups, {\it Bull. London Math. Soc.} {\bf 8(1)} (1973), 103-131.


\bibitem{Moreto2023}
A. Moret\'{o}, Multiplicities of character codegrees of finite groups, {\it Bull. London Math. Soc.} {\bf 55} (2023), 234-241.

\bibitem{Qian2021}
G. Qian, Element orders and codegrees, {\it Bull. London Math. Soc.} {\bf 53} (2021), 820–824.

\bibitem{Qian1} G. Qian, Y. Wang and H. Wei, Co-degrees of irreducible characters in finite groups, {\it J. Algebra} {\bf 312} (2007), 946-955.

\bibitem{Yang2017}
Y. Yang and G. Qian, The analog of Huppert’s conjecture on character codegrees, {\it J. Algebra} {\bf 478} (2017), 215–219.



\end{thebibliography}
\end{document}